\theoremstyle{definition}
\newcommand{\R}{\mathbb{R}}
\newcommand{\D}{\mathbb{D}}
\newcommand{\s}{\mathbb{S}}
\newcommand{\norm}[1]{\left\lVert#1\right\rVert}
\newtheorem{proposition}{Proposition}[section]
\newtheorem{theorem}[proposition]{Theorem}
\newtheorem{lemma}[proposition]{Lemma}
\newtheorem{conjecture}[proposition]{Conjecture}
\newtheorem{corollary}[proposition]{Corollary}
\newtheorem{question}[proposition]{Question}
\begin{document}
	
	\title{$4$-manifolds and non-existence of open book}
	
	\subjclass{}
	
	\keywords{$4$D open book, simplicial volume}
	
	\author{Shital Lawande}
	\address{IAI TCG CREST Kolkata, and Ramakrishna Mission Vivekananda Education and Research Institute, Belur Math}
	\email{shital.lawande@tcgcrest.org}
	
	\author{Kuldeep Saha}
	\address{IAI TCG CREST Kolkata, and Academy of Scientific and Innovative Research, Gazhiabad}
	\email{kuldeep.saha@gmail.com, kuldeep.saha@tcgcrest.org}

	\begin{abstract}
		
	Following a recent work of Kastenholz \cite{Kast}, we show existence of infinitely many parallelizable closed oriented $4$-manifolds, none of which admit an open book decomposition. This implies there is no analogue of the Giroux correspondence for Engel manifolds. In particular, not every Engel manifold has a supporting open book, in the sense of Colin, Presas and Vogel \cite{cpv}.   
	
	\end{abstract}
	
	\maketitle

	\section{Introduction}

	\noindent An open book decomposition of a manifold $M^m$ is a pair $(V^{m-1},h)$, such that $M^m$ is diffeomorphic to the quotient space $\mathcal{MT}(V^{m-1}, h) \cup_{id} \partial V^{m-1} \times D^2$. Here, $V^{m-1}$, called \emph{page}, is a manifold with boundary. The map $h$, called \emph{monodromy}, is a diffeomorphism of $V^{m-1}$ that restricts to identity near the boundary $\partial V$, and $\mathcal{MT}(V^{m-1}, h)$ denotes the mapping torus of $h$. It is well known that every closed, orientable, odd dimensional manifold admits open book decomposition. For closed orientable manifolds of even dimension greater than $4$, Quinn's signature invariant is the only obstruction to existence of an open book decomposition \cite{Qu}. For a long time it was not known whether the vanishing of signature is a sufficient condition in dimension $4$. A recent work of Kastenholz \cite{Kast}, has proved that vanishing of signature is not enough for existence of an open book decomposition. 
	
	\begin{theorem}[Kastenholz \cite{Kast}] \label{thmkast}
		
		A $4$-manifold admitting open book decomposition must have zero simplicial volume.
		
	\end{theorem}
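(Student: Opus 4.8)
The plan is the following. Present $M^4$ through its open book as
$M = \mathcal{MT}(V^3,h)\cup_{\partial V\times S^1}(\partial V^3\times D^2)$,
where $\Sigma:=\partial V^3$ is the binding and the gluing identifies the mapping-torus circle with $\partial D^2$; then reduce to the case of trivial monodromy by passing to a branched cover. The $k$-fold cyclic cover of $M$ branched over the binding is precisely the open book $M_k$ with page $V$ and monodromy $h^k$, and the covering map $M_k\to M$ has degree $k$, so $k\|M\|\le\|M_k\|$ and it suffices to exhibit, for some $k$, a vanishing bound on $\|M_k\|$. In the trivial-monodromy case the open book is
$\mathcal{MT}(V,\mathrm{id})\cup(\Sigma\times D^2) = (V\times S^1)\cup_{\Sigma\times S^1}(\Sigma\times D^2) = \partial(V^3\times D^2)$,
i.e.\ it bounds a compact $5$-manifold. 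Here $\|V\times D^2,\,V\times S^1\| = 0$, because $\mathrm{id}_V\times(z\mapsto z^m)$ is a proper self-map of the pair of degree $m$ for every $m\ge 2$ while the relative simplicial volume is finite; and since the boundary of a relative fundamental cycle of a compact manifold $(W,\partial W)$ is a fundamental cycle of $\partial W$, one has $\|\partial W\|\le(\dim W+1)\,\|W,\partial W\|$. Applied to $W = V^3\times D^2$ this yields $\|M\| = 0$ whenever $h$ is isotopic to the identity rel $\partial V$ (and, more generally, $\|\partial(P\times D^2)\| = 0$ for any compact $3$-manifold $P$).

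For a general monodromy I would bring in the geometry of the page. After isotoping $h$ rel $\partial V$ so as to respect the torus (JSJ) decomposition of $V^3$, choose $k$ so that $h^k$ fixes each piece and restricts, on every hyperbolic piece and every $I$-bundle piece, to a diffeomorphism isotopic to the identity rel that piece's boundary; this is where one uses Mostow rigidity (finiteness of the mapping class group of a finite-volume hyperbolic $3$-manifold with geodesic boundary, via $\mathrm{MCG}\cong\mathrm{Out}\,\pi_1$) and the Earle--Eells theorem that $\mathrm{Diff}_0$ of a hyperbolic surface is contractible, so that $\pi_1\mathrm{Diff}(\Sigma)=0$ and there is no boundary twisting to worry about. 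Now cut $M_k$ along the tori $T\times S^1$, $T$ a JSJ torus of $V$ --- these are $3$-tori, or $T^2$-bundles over $S^1$ where $h^k$ still permutes some tori, in all cases with amenable fundamental group --- and use the additivity of the simplicial volume under gluings along submanifolds with amenable $\pi_1$ (Gromov; Bucher--Burger--Frigerio--Kotschick--Pagliantini) to write $\|M_k\|$ as a sum of relative simplicial volumes of the pieces. Each piece is then one of: $\mathcal{MT}(P,h^k|_P)$ with $P$ Seifert fibered, a circle-fibered (Seifert) $4$-manifold, hence of vanishing relative simplicial volume; $\mathcal{MT}(P,h^k|_P)\cong P\times S^1$ with $P$ hyperbolic or an $I$-bundle, vanishing because it is a product with a circle; or, for the piece $P_0$ carrying a higher-genus boundary component $\Sigma_0$ of $V$, the manifold $(P_0\times S^1)\cup_{\Sigma_0\times S^1}(\Sigma_0\times D^2)$ together with its torus boundary, which sits as a codimension-$0$ submanifold of the closed manifold $\partial(P_0\times D^2)$ and hence has relative simplicial volume at most $\|\partial(P_0\times D^2)\| = 0$. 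Summing gives $\|M_k\| = 0$, hence $\|M\| = 0$.

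The step I expect to be the real obstacle is this last, organizational one: running the JSJ decomposition of the page compatibly with the monodromy, the branched cover and the open-book gluing; checking that every piece is of one of the three kinds above; and --- most importantly --- showing that the part of $h$ that genuinely has infinite order, namely its shearing along the JSJ tori, contributes nothing, which is exactly what the amenable gluing along the $3$-tori $T\times S^1$ provides. The remaining ingredients ($\|X\times D^2,\,X\times S^1\| = 0$, the inequality $\|\partial W\|\le(\dim W+1)\|W,\partial W\|$, and vanishing of the relative simplicial volume of products with $S^1$ and of Seifert-fibered $4$-manifolds) are routine.
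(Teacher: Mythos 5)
First, a point of order: this paper does not prove Theorem \ref{thmkast} at all --- it is quoted from Kastenholz \cite{Kast} and used as a black box, so there is no in-paper proof to compare you against. Judged on its own terms, your outline follows what is essentially the right strategy (and, as far as I can tell, the general lines of the argument in the literature, cf.\ Bucher--Neofytidis on mapping tori of $3$-manifolds): reduce to powers of the monodromy via the degree-$k$ cyclic cover branched over the binding, decompose the page along its geometric decomposition, cut the open book along the resulting hypersurfaces with amenable fundamental group, and kill each piece either by a proper self-map of degree $m\ge 2$ or by a circle direction. The branched-cover inequality $k\norm{M}\le\norm{M_k}$ is correct (it holds for any degree-$k$ map of closed oriented manifolds), as are the vanishing of $\norm{V\times D^2, V\times S^1}$ and the bound $\norm{\partial W}\le(\dim W+1)\norm{W,\partial W}$.

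There are, however, genuine gaps. The most concrete: your claim that a codimension-$0$ submanifold $N$ of a closed manifold $Y$ satisfies $\norm{N,\partial N}\le\norm{Y}$ is false --- a pair of pants $\Sigma_{0,3}\subset S^2$ has $\norm{\Sigma_{0,3},\partial\Sigma_{0,3}}=2$ while $\norm{S^2}=0$. So your treatment of the binding piece $(P_0\times S^1)\cup_{\Sigma_0\times S^1}(\Sigma_0\times D^2)$ does not work as written; it can be repaired by the same self-map trick you use elsewhere, since $z\mapsto z^m$ on the $S^1$ factor of $P_0\times S^1$ and on the $D^2$ factor of $\Sigma_0\times D^2$ agree on the overlap and assemble to a proper degree-$m$ self-map of the pair. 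Second, the Seifert-fibered pieces are dismissed with ``hence of vanishing relative simplicial volume,'' but this is exactly where the closed-mapping-torus case requires real work: one must isotope $h^k$ on such a piece to a fiber-preserving diffeomorphism and then prove a \emph{relative} vanishing statement for the resulting circle-fibered $4$-manifold with boundary. Third, when $\partial V$ has higher-genus components the relevant decomposition of the page is the characteristic-submanifold decomposition along tori \emph{and annuli}, not just tori; cutting $M_k$ along (annulus)$\,\times S^1$ pieces is not a gluing along closed amenable hypersurfaces, and your outline does not address this (the Earle--Eells citation is also misplaced here: it concerns surface diffeomorphism groups, whereas the twisting you need to control lives on the $3$-dimensional page). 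These issues are fixable in principle, but they sit precisely where the content of the theorem lies, so as it stands your argument is a plausible outline with the hard steps asserted rather than proved.
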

	
	\noindent The product of two surfaces of genus greater than equal to two has signature zero, but has nonzero simplicial volume. The notion of \emph{simplicial volume} was introduced by Gromov \cite{Gr}. For a topological space $X$, let $c = \sum_{i} r_i \sigma_i$ ($\sigma_i$s singular simplices) be a real $k$-chain in $C_k(X;\R)$. The \emph{simplicial $l^1$-norm} of $c$ is defined to be $\norm{c} = \sum_{i}|r_i|$.  The \emph{Gromov norm} of a homology class $\alpha \in H_k(X;\R)$ is then defined as $\norm{\alpha} = \inf_z \norm{z}$, where $z$ runs over all singular cycles representing $\alpha$. For a closed orientable manifold $V$ its \emph{simplicial volume} $\norm{V}$ is defined as the Gromov norm
	of its fundamental class. It is known that a hyperbolic manifold has non-zero simplicial volume (see sections $0.2$ and $0.3$ in \cite{Gr}). Since, $\Sigma_g \times \Sigma_h$ is hyperbolic for $h\geq2$ and $g \geq 2$, by Theorem \ref{thmkast} it can not admit an open book decomposition, despite having signature zero.

	\noindent As an application of Theorem \ref{thmkast}, we prove the following.

	\begin{theorem} \label{mainthm}
		
	There exist infinitely many parallelizable closed orinted $4$-manifolds, none of which admit open book decomposition. 
		
	\end{theorem}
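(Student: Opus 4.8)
The plan is to produce, for each integer $g \geq 2$, a closed oriented smooth $4$-manifold $N_g$ that is parallelizable and has $\norm{N_g} \neq 0$. Granting Theorem \ref{thmkast}, none of these manifolds can admit an open book decomposition, and once we check that the $N_g$ are pairwise non-homotopy-equivalent the statement follows.

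The first step is to translate parallelizability into numerical data. I will use the classical fact that a closed connected oriented smooth $4$-manifold $M$ has trivial tangent bundle if and only if $M$ is spin, $\sigma(M) = 0$ and $\chi(M) = 0$: the stable obstructions to trivializing $TM$ are $w_2(M)$ and $p_1(M) = 3\sigma(M)$, and in dimension four the only remaining unstable obstruction is the Euler class $e(TM)$, which evaluates to $\chi(M)$ (see e.g. Kirby's book or Gompf--Stipsicz). So it is enough to build infinitely many pairwise distinct closed oriented $4$-manifolds that are spin, have vanishing signature and Euler characteristic, and have positive simplicial volume.

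For the construction I would take as building block $X_g := \Sigma_g \times \Sigma_g$ with $g \geq 2$. This is spin (a product of spin surfaces), has $\sigma(X_g) = 0$ by multiplicativity of the signature, and has $\norm{X_g} \neq 0$ as recalled after Theorem \ref{thmkast}; its only defect is $\chi(X_g) = (2-2g)^2 > 0$. To cancel the Euler characteristic I would connect-sum with copies of $Y := S^1 \times S^3$, using the standard facts that $\chi(A \# B) = \chi(A) + \chi(B) - 2$ for closed $4$-manifolds, that $\sigma$ is additive and the spin condition is preserved under connected sum, and that $\norm{A \# B} = \norm{A} + \norm{B}$ in dimensions $\geq 3$ (Gromov), together with $\chi(Y) = \sigma(Y) = 0$, $Y$ spin, and $\norm{Y} = 0$ (since $\pi_1(Y) \cong \Z$ is amenable). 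Setting $k_g := 2(g-1)^2 = \tfrac12\chi(X_g) \in \Z_{\geq 0}$ and
\[
N_g \;:=\; X_g \,\#\, \bigl(\#^{\,k_g}\, Y\bigr),
\]
we obtain $\chi(N_g) = (2-2g)^2 - 2k_g = 0$, $\sigma(N_g) = 0$ and $N_g$ spin, hence $N_g$ parallelizable, while $\norm{N_g} = \norm{X_g} \neq 0$; Theorem \ref{thmkast} then forbids an open book on $N_g$. Finally, $b_1(N_g) = b_1(X_g) + k_g = 4g + 2(g-1)^2$ is strictly increasing in $g$ (equivalently $\norm{N_g} = \norm{X_g} \to \infty$ as $g \to \infty$), so the $N_g$ are pairwise non-homotopy-equivalent and the family is infinite.

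I do not anticipate a genuinely hard step once Theorem \ref{thmkast} is available: the whole point is to choose building blocks whose simplicial volume, signature, Euler characteristic and spinness all behave well under connected sum, so that a positive simplicial volume survives while the obstructions to parallelizability are driven to zero. The only places needing care are verifying that $k_g$ is a nonnegative integer (it is, since $4 \mid (2-2g)^2$) and quoting the parallelizability criterion in its precise bundle-theoretic form.
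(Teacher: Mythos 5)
Your proposal is correct and follows essentially the same route as the paper: take $\Sigma_g \times \Sigma_g$, kill the Euler characteristic by connect-summing with copies of $\s^1 \times \s^3$ (the paper's Lemma~\ref{mainlemma} does exactly this for $\Sigma_g \times \Sigma_h$), invoke Gromov's additivity of simplicial volume under connected sum, and distinguish the resulting manifolds by their first Betti number / fundamental group. The only cosmetic differences are that you phrase parallelizability via the spin--signature--Euler criterion rather than ``stably parallelizable with $\chi = 0$,'' and you deduce $\norm{\s^1 \times \s^3} = 0$ from amenability of $\Z$ rather than from Theorem~\ref{thmkast} applied to its open book.
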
 
	
	 \noindent The Thurston--Winkelenkemper \cite{TW} construction associates a contact structure to an open book with symplectic page and a symplectomorphism as monodromy. The Giroux correspondence \cite{Gi} says that the converse is also true. In particular, every contact structure is supported by an open book. Colin, Presas and Vogel \cite{cpv} investigated an analogue of the Giroux correspondence for Engel manifolds. Vogel \cite{Vog} has shown that every parallelizable $4$-manifold admits an Engel structure and vice versa. Colin, Presas and Vogel \cite{cpv} gave an analogue of the Thurston--Winkelenkemper construction for Engel manifolds by associating an Engel structure to an open book with page a contact $3$-manifold with tori boundary components and monodromy which preserves a framing on the page. They asked the following.
	
	\begin{question}(Question $5.6$ in \cite{cpv}) \label{qngc}
		Is every Engel structure homotopic to an Engel structure carried by an
		open book ?
	\end{question}
	
	\noindent Theorem \ref{mainthm} answers Question \ref{qngc} in negative. 
	
	\begin{corollary}\label{gcfail}
		There exist infinitely many Engel manifolds that admit no supporting open book.
	\end{corollary}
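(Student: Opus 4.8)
The plan is to deduce Corollary~\ref{gcfail} formally from Theorem~\ref{mainthm} together with the two structural facts about Engel manifolds recalled in the introduction: Vogel's theorem \cite{Vog} that a closed oriented $4$-manifold is parallelizable if and only if it admits an Engel structure, and the construction of Colin--Presas--Vogel \cite{cpv}, in which a \emph{supporting open book} for an Engel manifold is, by definition, an open book decomposition of the underlying $4$-manifold whose page is a contact $3$-manifold with tori boundary components and whose monodromy preserves a framing on the page. The crucial observation is that the existence of such a supporting open book entails, a fortiori, the existence of an open book decomposition of the ambient $4$-manifold in the sense used in Theorem~\ref{thmkast} and Theorem~\ref{mainthm}.

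Concretely, I would first invoke Theorem~\ref{mainthm} to fix an infinite family $\{M_i\}_{i\in\N}$ of pairwise non-diffeomorphic parallelizable closed oriented $4$-manifolds, none of which admits an open book decomposition. By Vogel's theorem each $M_i$ carries some Engel structure $\mathcal{D}_i$, so $\{(M_i,\mathcal{D}_i)\}_{i\in\N}$ is an infinite family of pairwise non-diffeomorphic Engel manifolds. Suppose, for contradiction, that one of them, say $(M_i,\mathcal{D}_i)$, admitted a supporting open book in the sense of \cite{cpv}. Unwinding the definition, this would present $M_i$ as $\mathcal{MT}(V,h)\cup_{\id}\partial V\x D^2$ for a compact $3$-manifold $V$ with tori boundary and a diffeomorphism $h$ of $V$ that is the identity near $\partial V$; but this is exactly an open book decomposition of $M_i$, contradicting the choice of $M_i$. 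Hence no $(M_i,\mathcal{D}_i)$ admits a supporting open book, which proves the corollary and, in the same stroke, answers Question~\ref{qngc} in the negative.

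The only step that requires genuine care is checking the compatibility of conventions: one must verify that the notion of an open book carrying an Engel structure in \cite{cpv} does specialize to an honest open book decomposition of the $4$-manifold as used in Theorem~\ref{thmkast}, with the extra data (contact structure on the page, tori boundary, framing-preserving monodromy) being additional constraints rather than a weakening. Once this is confirmed, the argument is purely formal and the entire topological content sits in Theorem~\ref{mainthm} (and hence, ultimately, in Kastenholz's simplicial-volume obstruction, Theorem~\ref{thmkast}).
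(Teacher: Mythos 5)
Your argument is correct and coincides with the paper's (implicit) justification: the manifolds from Theorem~\ref{mainthm} carry Engel structures by Vogel's theorem, and since a supporting open book in the sense of \cite{cpv} is in particular an open book decomposition of the underlying $4$-manifold, none of these Engel manifolds can admit one. The compatibility-of-conventions check you flag is indeed the only point needing care, and it holds as you expect.
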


	\section{Proof of Theorem \ref{mainthm}}

	\begin{lemma} \label{mainlemma}
		Let $M^4$ be a stably parallelizable closed orientable $4$-manifold such that $\chi(M)$ is even and $\norm{M} > 0$. Then, there exists a closed orientable $4$-manifold $X$ such that $X \# M$ is parallelizable and $\norm{X \# M} > 0$. 
	\end{lemma}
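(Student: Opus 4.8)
The plan is to realize $X$ as a connected sum of standard building blocks, chosen so that $X \# M$ has vanishing Euler characteristic, and then to combine two facts: a stably parallelizable closed orientable $4$-manifold with $\chi = 0$ is parallelizable, and simplicial volume is additive under connected sum in dimension at least $3$.

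First I would record the passage from stable to genuine parallelizability. Let $N^4$ be a closed orientable $4$-manifold with $TN$ stably trivial. The Euler class of $TN$ equals $\chi(N)$ times a generator of $H^4(N;\Z)\cong\Z$, so if $\chi(N)=0$ then $TN$ admits a nowhere-zero section and splits as $\R\oplus E$ for an oriented rank-$3$ bundle $E$. Since $TN$ is stably trivial, $w_2(E)=0$ and $p_1(E)=p_1(TN)=0$, so $E$ lifts to a principal $\mathrm{SU}(2)\cong\mathrm{Spin}(3)$ bundle over the $4$-complex $N$; such bundles over a $4$-complex are classified by the single characteristic class $c_2$, and here $-2c_2=p_1(E)=0$, so $E$, and hence $TN$, is trivial. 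Thus it suffices to arrange that $X\#M$ is stably parallelizable with $\chi(X\#M)=0$.

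Next I would choose $X$. Since $M$ is stably parallelizable we have $p_1(M)=0$, hence $\sigma(M)=0$ and therefore $\chi(M)\equiv\sigma(M)\equiv 0\pmod 2$ automatically, so $2-\chi(M)$ is even. Put $X=S^4$ if $\chi(M)=0$, $X=\#^{k}(S^1\times S^3)$ with $k=\chi(M)/2$ if $\chi(M)>0$, and $X=\#^{k}(S^2\times S^2)$ with $k=-\chi(M)/2$ if $\chi(M)<0$. In every case $X$ is a closed orientable $4$-manifold with $\chi(X)=2-\chi(M)$; each building block is stably parallelizable and has vanishing simplicial volume (a product $S^1\times Y$ carries self-maps of every degree, and $\norm{S^2\times S^2}=0$ since $\norm{S^2}=0$), and both properties are inherited by connected sums, so $X$ is stably parallelizable with $\norm{X}=0$. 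Then $X\#M$ is stably parallelizable as well: by obstruction theory this holds because the restriction maps carry $w_1,w_2$ of $X\#M$ to those of the two summands, which vanish, while $p_1(X\#M)=3\sigma(X\#M)=3\big(\sigma(X)+\sigma(M)\big)=0$. Moreover $\chi(X\#M)=\chi(X)+\chi(M)-2=0$, so by the first step $X\#M$ is parallelizable.

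Finally I would invoke Gromov's additivity of simplicial volume under connected sum in dimensions $\geq 3$, which gives $\norm{X\#M}=\norm{X}+\norm{M}=\norm{M}>0$, completing the argument. I expect the only genuinely delicate point to be the first step, namely checking that once the tangent bundle is stably trivial and the dimension is $4$ the Euler class is the sole remaining obstruction to parallelizability; the additivity of simplicial volume is used as a black box from \cite{Gr}, and the stability of stable parallelizability under connected sum is routine obstruction theory.
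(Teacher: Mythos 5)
Your argument is correct and its skeleton is the same as the paper's: the same choice of $X$ (namely $\#^{k}(\s^1\times\s^3)$ or $\#^{|k|}(\s^2\times\s^2)$ according to the sign of $\chi(M)$, and the trivial summand when $\chi(M)=0$), the same reduction of parallelizability to stable parallelizability together with $\chi(X\#M)=0$, and the same appeal to Gromov's additivity of simplicial volume under connected sum. You differ in how two auxiliary facts are justified. First, you prove rather than cite the statement that a stably parallelizable closed orientable $4$-manifold with vanishing Euler characteristic is parallelizable, via the $\mathrm{Spin}(3)\cong\mathrm{SU}(2)$ lift of the rank-$3$ complement of a nowhere-zero vector field; the paper simply asserts that the Euler characteristic is the only obstruction. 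Second, you establish $\norm{X}=0$ by elementary means (degree-$d$ self-maps of $\s^1\times\s^3$, and the product inequality or amenability of $\pi_1$ for $\s^2\times\s^2$), whereas the paper exhibits explicit open books on the summands and invokes Theorem~\ref{thmkast}; your route is more self-contained, since it does not use the paper's main imported theorem for a step where it is not needed. One cosmetic slip: for the rank-$3$ real bundle $E$ associated to an $\mathrm{SU}(2)$-bundle the relation is $p_1(E)=-4c_2$, not $-2c_2$ (the latter holds for the underlying rank-$4$ real bundle of the fundamental representation); since $H^4(N;\Z)\cong\Z$ is torsion-free, the conclusion $c_2=0$, and hence triviality of $E$, is unaffected.
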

	
	\begin{proof}
		The obstruction to parallelizability of a stably parallelizable closed $4$-manifold is its Euler characteristic. Let $\chi(M) = 2k$. When $k=0$, we take $X$ to be empty set and the result follows.  For $k \geq 1$, we take $X = \#^k \s^1 \times \s^3$. Since $\chi(\s^1 \times \s^3 \# M) = \chi(M) - 2$, we get $\chi(X \# M) = 0$. For $k \leq -1$, we take $X = \#^{|k|} \s^2 \times \s^2$. Since $\chi(\s^2 \times \s^2 \# M) = \chi(M) +2$, we again have $\chi(X \#M) = 0$. Both $\s^1 \times \s^3$ and $\s^2 \times \s^2$ are stably parallelizable, as they smoothly embed in $\R^5$. Since stable parallelizability is preserved under connected sum (see proposition $1.3$ in \cite{jw}), $X \# M$ is also stably parallelizable with $\chi(X\#M) = 0$. Thus, $X \# M$ is parallelizable. By Gromov (section $3.5$ in \cite{Gr}), $\norm{X \# M} = \norm{X} + \norm{M}$. Now, both $\s^1 \times \s^3$ and $\s^2 \times \s^2$ admits open book decompositions with page $\s^1 \times \D^2$ and $\s^2 \times [0,1]$, respectively, with the identity map as monodromies. Taking boundary connected sum of their pages induces open book structures on their connected sums. Therefore, by Theorem \ref{thmkast}, $\norm{X} = 0$. Hence, $\norm{X \# M} > 0$.   
	\end{proof}

	\noindent The proof of Theorem \ref{mainthm} now follows from the fact that $\{\Sigma_g \times \Sigma_h\}_{g \geq 2,h \geq 2}$ gives an infinite family of stably parallelizable closed oriented $4$-manifolds with non-zero simplicial volume and with distinct Euler characteristics of the form $4(g-1)(h-1)$. Among the corresponding parallelizable manifolds $X_{g,h} \# \Sigma_g \times \Sigma_h$, as constructed in Lemma  \ref{mainlemma}, infinitely many can be distinguished by the number of free generators in their fundamental groups.

	\section{Open books and a conjecture of Gromov}
	
	Kastenholz's work connects the existence question for $4$-dimensional open books to the following conjecture of Gromov (conjecture $2$ in \cite{crw}). 
	
	\begin{conjecture}(Gromov) \label{congro}
		If $M$ is aspherical and $ \norm{M} = 0$, then $\chi(M) = 0$.
	\end{conjecture}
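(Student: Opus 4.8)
\noindent The plan is to attack Conjecture \ref{congro} through the $L^2$-Betti numbers of the universal cover $\widetilde{M}$, which refine the Euler characteristic: by Atiyah's $L^2$-index theorem one has $\chi(M) = \sum_{i} (-1)^i b_i^{(2)}(\widetilde{M})$, so it suffices to prove that $\norm{M} = 0$ forces enough of the $b_i^{(2)}(\widetilde M)$ to vanish. The easy case is $\pi_1(M)$ amenable and infinite: then $\norm{M} = 0$ automatically, all $b_i^{(2)}(\widetilde M)$ vanish, and hence $\chi(M) = 0$ --- an argument that does not even use asphericity. The whole content of the conjecture is thus to promote the hypothesis ``$\pi_1$ amenable'' to the far weaker ``$\norm{M} = 0$'', and the role of asphericity should be to make the simplicial volume of $M$ a genuine invariant of $\pi_1(M)$ that one can analyse group-theoretically.

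Concretely, I would route the argument through Gromov's vanishing theorem and its refinements by Sauer and Löh: if $M$ admits an open cover by amenable subsets of multiplicity at most $\dim M$, then $\norm{M} = 0$, and, for an aspherical $M$, all $L^2$-Betti numbers of $\widetilde M$ vanish as well, so that $\chi(M) = 0$. Granting this, Conjecture \ref{congro} reduces to the assertion that, for aspherical $M$, vanishing of simplicial volume implies that the amenable category of $M$ is strictly less than $\dim M$. This is the step in which asphericity must do the real work: for an aspherical manifold the amenable category is a $\pi_1$-invariant, and the hope is to manufacture a low-multiplicity amenable cover directly from a real fundamental cycle of small $l^1$-norm, by a controlled straightening-and-thickening construction in $\widetilde M$ in the spirit of Gromov's filling and diffusion-of-cycles techniques.

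Several special cases serve as consistency checks. The conjecture is vacuous in odd dimensions, where $\chi(M) = 0$ by Poincar\'e duality, and elementary for surfaces, so the first genuinely interesting case --- and the one tied to the open-book problem via Theorem \ref{thmkast} --- is dimension $4$. Here I would try to combine the Singer conjecture, known for many classes of aspherical $4$-manifolds (e.g.\ those that are nonpositively curved or fibre over a surface), which identifies $\chi(M)$ with $b_2^{(2)}(\widetilde M)$, with a direct estimate of $b_2^{(2)}(\widetilde M)$ against the $l^1$-seminorm of $[M]$.

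The main obstacle is exactly the last link in each of these chains: there is at present no general mechanism that converts a fundamental cycle of small simplicial norm into either the vanishing of a middle-dimensional $L^2$-Betti number or an amenable cover of controlled multiplicity. Simplicial volume is a soft metric invariant whose vanishing is, in practice, always certified by producing amenable structure rather than deduced from it, so the crux of the conjecture is precisely this converse implication for aspherical spaces --- essentially equivalent to Gromov's own question of whether $\norm{M} = 0$ implies that all $L^2$-Betti numbers of $\widetilde M$ vanish. I would therefore expect unconditional progress only under additional hypotheses on $\pi_1(M)$, such as residual finiteness or an a priori bound on its cohomological dimension, with the general case remaining open.
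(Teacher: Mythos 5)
This statement is not proved in the paper at all: it is stated as an open conjecture of Gromov (attributed to \cite{crw}), and the authors only discuss its consequences for the existence of $4$-dimensional open books. So there is no ``paper proof'' to compare against, and any complete argument you gave would be a major new result rather than a reconstruction.

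Your proposal is, by your own admission in the final paragraph, not a proof: the chain of reductions (simplicial volume $\Rightarrow$ amenable cover of small multiplicity $\Rightarrow$ vanishing of $L^2$-Betti numbers of $\widetilde M$ $\Rightarrow$ $\chi(M)=0$ via Atiyah's formula) is the standard and reasonable line of attack, and the individual implications you cite (Gromov's vanishing theorem, the Sauer--L\"oh refinements for aspherical manifolds, the amenable case) are correct as stated. But the decisive step --- converting the mere vanishing of $\norm{M}$ into an amenable cover of multiplicity at most $\dim M$, or directly into vanishing of the $L^2$-Betti numbers --- is exactly the open problem, and your ``controlled straightening-and-thickening construction'' is named but not carried out. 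There is no known mechanism of this kind, and indeed the conjecture is open even in dimension $4$, which is the case relevant to this paper. You have written an accurate survey of why the conjecture is plausible and where it is hard, which is valuable as exposition, but it should be labelled as a discussion of an open problem, not submitted as a proof. For the purposes of this paper, the correct posture is the one the authors take: treat Conjecture \ref{congro} as a hypothesis and note that, combined with Theorem \ref{thmkast}, it would obstruct open books on aspherical $4$-manifolds with $\chi \neq 0$, while an aspherical $4$-manifold with nonzero Euler characteristic admitting an open book would refute the conjecture.
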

	
	\noindent If Conjecture \ref{congro} is true in dimension $4$, then along with Theorem \ref{thmkast}, it implies that an aspherical closed oriented $4$-manifold $V$ can admit open book decomposition only if $\chi(V) = 0$. For example, Ratcliffe and Tscantz \cite{RT} have constructed an inifinite family of aspherical closed $4$-manifolds which are integral homology $4$-spheres. A positive answer to Conjecture \ref{congro} will imply that none of these aspherical manifolds can admit open book decomposition. On the other hand, existence of open book on a $4$-dimensional aspherical manifold with non-zero Euler characteristic will provide a counter example to Conjecture \ref{congro} in dimension $4$.

	\subsection{Acknowledgement} The authors thank Chun-Sheng Hsueh for introducing them to the work of Kastenholz \cite{Kast}.

	\section{Preliminaries}	\label{sec2}

\end{document}